\theoremstyle{plain}
\newtheorem{theorem}{Theorem}[section]
\newtheorem{lemma}{Lemma}[section]
\theoremstyle{definition}
\begin{document}

\title{On the maximum CEI of graphs with parameters}
\author{Fazal Hayat\footnote{E-mail: fhayatmaths@gmail.com}\\
School of Mathematical Sciences,  South China Normal University, \\
 Guangzhou 510631,  PR China}

 \date{}
\maketitle

\begin{abstract}
The connective eccentricity index (CEI) of a graph $G$ is defined as $\xi^{ce}(G)=\sum_{v \in V(G)}\frac{d_G(v)}{\varepsilon_G(v)}$, where $d_G(v)$ is the degree of $v$ and $\varepsilon_G(v)$ is the eccentricity of $v$. In this paper, we characterize  the unique graphs with  maximum CEI from three classes of graphs: the $n$-vertex graphs with fixed connectivity and diameter, the $n$-vertex graphs  with fixed connectivity and independence number, and the $n$-vertex graphs with fixed connectivity and minimum degree.  \\ \\
{\bf Key Words}: connective eccentricity  index,   connectivity, diameter, minimum degree, independent number.\\\\
{\bf 2010 Mathematics Subject Classification:}  05C07; 05C12; 05C35
\end{abstract}

\section{Introduction}
All graphs considered in this paper are simple and connected. Let $G$ be graph on $n$ vertices with vertex set $V(G)$ and edge set $E(G)$.  For  $v \in V(G)$, let $N_G(v)$ be the set of all neighbors of $v$ in $G$. The degree of $v \in V(G)$, denoted by $d_G(v)$,  is the cardinality of  $N_G(v)$. The maximum and minimum degree of $G$ is denoted by $\Delta(G)$ and $\delta(G)$, respectively.  The graph formed from $G$ by deleting any vertex $v \in V(G)$ (resp. edge $uv \in E(G)$) is denoted by $G-v$  (resp. $G-uv$). Similarly, the graph formed from $G$  by adding an edge  $uv$ is denoted by $G+uv$, where $u$ and $v$ are  non-adjacent vertices of $G$.  For a vertex subset $A$ of $V(G)$, denote by $G[A]$ the subgraph induced by $A$. The distance between  vertices $u$ and $v$ of $G$, denoted by $d_G(u,v)$, is the length of a shortest path connecting $u$ and $v$ in $G$.  For $v\in V(G)$, the eccentricity of $v$ in $G$, denoted $\varepsilon_G(v)$, is the  maximum distance from $v$ to all other vertices of $G$. The diameter of a graph $G$  is the maximum eccentricity of all vertices in $G$. As usual, by $S_n$ and $P_n$ we denote the star and path on $n$ vertices, respectively.

A subset $A$ of $V(G)$ is called vertex cut of $G$ if $G-A$ is disconnected. The minimum size of $A$ such that $G-A$ is disconnected or has exactly one vertex is called connectivity of $G$, and denote by $k(G)$. A graph is $k$-connected if its connectivity is at least $k$.
A subset $I$ of  $V(G)$ is called an independent set of $G$ if $I$ contains pairwise non-adjacent vertices. The independence number of $G$ denoted by $\alpha(G)$, is the maximum number of vertices of  independent sets of $G$.
The join of two disjoint graphs $M_1$ and $M_2$ is denoted by $M_1 \vee M_2$ is the graph formed from $M_1 \cup M_2$ by adding the edges $\{e=xy : x\in V(M_1), y\in V(M_2) \}$. Let $M_1 \vee M_2 \vee  M_3 \vee \dots \vee M_t=(M_1\vee M_2)\cup (M_2\vee M_3)\cup \dots \cup (M_{t-1}\vee M_t)$. The sequential join of $t$ disjoint copies of a graph $G$ is denoted by $[t]G$, and the union of $m$ disjoint copies of a graph $G$ is denoted by $mG$.

Topological indices are numbers reflecting certain structural features of a molecule that are derived from its molecular graph. They  are used in theoretical chemistry for design of chemical compounds with given physicochemical properties or given pharmacological and biological activities.

The eccentric connectivity index of a graph $G$ is a topological index based on degree and eccentricity, defined as
\[
\xi^c (G)=\sum_{v \in V(G)}d_G(v)\varepsilon_G(v).
\]
It has been studied extensively, see  \cite{1,2,3,4,6}.

Gupta et al. in 2000 \cite{GSM},  proposed a new topological index involving degree and eccentricity called the connective eccentricity index,  defined as
\[
\xi^{ce}(G)=\sum_{v \in V(G)}\frac{d_G(v)}{\varepsilon_G(v)}.
\]
From experiments for treating hypertension of chemical compounds like non-peptide N-benzylimidazole derivatives, the results obtained using the connective eccentricity index were better than the corresponding values obtained using Balaban’s mean square distance index. Therefore it is worth studying mathematical properties of connective eccentricity index.

Mathematical properties of connective eccentricity index have been studied extensively for trees, unicyclic and general graph. In particular, Yu and Feng \cite {YF} obtained upper or lower bounds for connective eccentricity index  of graphs in terms of many graph parameters such as radius,  maximum degree,
independence number, vertex connectivity, minimum degree, number of pendant vertices and number of cut edges. Li and Zhao \cite {LZ} studied the extremal properties of connective eccentricity index among n-vertex trees with given graph parameters such as, number of pendant vertices, matching number, domination number, diameter, vertex bipartition. Xu et al. \cite {XDL} characterized the extremal graph for connective eccentricity index among all connected graph with fixed order and fixed matching number. 
 For more studies on connective eccentricity index of graphs we refer \cite{LLZ, TWLF, YQTF} and the references cited therein.

In the present paper, as a continuance we mainly study the mathematical properties of the connective eccentricity index of graphs in terms of various graph invariants. First, we determine the graph which attains the maximum CEI among $n$-vertex graphs with fixed connectivity and diameter, then we identify the unique graph with given connectivity and independence number having the maximum CEI. Finally, we characterize those graph with maximum CEI among $n$-vertex graphs with fixed connectivity and minimum degree.

\begin{lemma}\cite{YF}\label{L1}
  Let $G$ be a graph with a pair of non adjacent vertices $u, v$. Then   $\xi ^{ce}(G) < \xi ^{ce}(G+uv)$.
\end{lemma}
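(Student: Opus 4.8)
The plan is to compare $\xi^{ce}(G)$ and $\xi^{ce}(G+uv)$ term by term over the common vertex set $V(G)=V(G+uv)$, exploiting that the summand $d/\varepsilon$ is monotone: it grows when the degree increases and when the eccentricity decreases. Write $G'=G+uv$. First I would record the two elementary effects of inserting the edge $uv$. For degrees, only $u$ and $v$ are affected: $d_{G'}(u)=d_G(u)+1$, $d_{G'}(v)=d_G(v)+1$, and $d_{G'}(w)=d_G(w)$ for every other vertex $w$.

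For eccentricities, the key observation is that adding an edge cannot lengthen any shortest path, so $d_{G'}(x,y)\le d_G(x,y)$ for all pairs $x,y$; taking maxima over $y$ gives $\varepsilon_{G'}(w)\le\varepsilon_G(w)$ for every vertex $w$. Since $G$ is connected on at least two vertices, all eccentricities are positive, so these quantities may safely appear in denominators.

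Combining the two facts, for every vertex $w$ we have $d_{G'}(w)\ge d_G(w)$ and $0<\varepsilon_{G'}(w)\le\varepsilon_G(w)$, whence
\[
\frac{d_{G'}(w)}{\varepsilon_{G'}(w)}\ge\frac{d_G(w)}{\varepsilon_G(w)}.
\]
Summing over $w\in V(G)$ already yields $\xi^{ce}(G')\ge\xi^{ce}(G)$. To upgrade this to a strict inequality I would isolate the contribution of $u$, where
\[
\frac{d_{G'}(u)}{\varepsilon_{G'}(u)}\ge\frac{d_G(u)+1}{\varepsilon_G(u)}>\frac{d_G(u)}{\varepsilon_G(u)},
\]
a strict gain because the numerator increases by one while the denominator does not grow. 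As all remaining terms are at least as large as before, the total sum increases strictly, giving $\xi^{ce}(G)<\xi^{ce}(G')$.

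I do not anticipate a genuine obstacle here; the only point demanding care is the monotonicity of distance under edge addition, which is standard but should be stated explicitly so that the eccentricity bound $\varepsilon_{G'}(w)\le\varepsilon_G(w)$ is justified rather than merely assumed.
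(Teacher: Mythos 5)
Your proof is correct and is essentially the standard argument for this lemma (which the paper only cites from Yu and Feng without reproducing a proof): degrees weakly increase, eccentricities weakly decrease under edge addition, and the endpoint $u$ contributes a strictly larger term. No issues.
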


\section{Results}

Let $\mathbb{G}_k(n,d)$ be the class of all $k$-connected graphs of order $n$ with diameter $d$. If $d=1$, then $K_n$ is the unique graph in $\mathbb{G}_k(n,1)$. Therefore, we consider $d\geq 2$ in what follows.

Denoted by
$
G(n,k,d)= K_1 \vee [(d-2)/2]K_k \vee K_{n-kd+2k-2} \vee [(d-2)/2]K_k \vee K_1
$
for even $d \geq 4$, and let
$\mathcal{H}(n,k,d)$ be the set of graphs of $K_1 \vee [(d-3)/2]K_k \vee K_{s+1} \vee K_{t+1} \vee [(d-3)/2]K_k \vee K_1$, where  $s, t \geq k-1$ and  $s+ t= n-kd+3k-4$ for odd $d \geq 3$.

\begin{theorem}\label{T1}
Let $G$ be a graph in $\mathbb{G}_k(n,d)$ with maximum CEI, where $d\geq 3$. Then $G \cong G(n,k,d)$ if $d$ is even, and $G \in \mathcal{H}(n,k,d)$ otherwise.
\end{theorem}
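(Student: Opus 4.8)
The plan is to use Lemma~\ref{L1} to force the extremal graph into a layered ``join of cliques'' shape, and then to solve the resulting finite optimization over the layer sizes by pushing vertices toward the centre. Fix a graph $G\in\mathbb{G}_k(n,d)$ of maximum CEI and vertices $u,w$ with $d_G(u,w)=d$, and set $V_i=\{x:d_G(u,x)=i\}$ for $0\le i\le d$. Every $V_i$ is nonempty (a shortest $u$--$w$ path meets each layer) and, by the breadth-first property, each edge of $G$ lies inside a layer or between consecutive layers. Inserting an edge inside a layer or between two consecutive layers cannot shorten any distance from $u$, so it preserves $d_G(u,w)=d$; since adding edges never increases distances, the new graph still has diameter $d$ and connectivity at least $k$, hence lies in $\mathbb{G}_k(n,d)$. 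By Lemma~\ref{L1} the maximizer must already contain all such edges, so each $V_i$ is a clique, consecutive layers are completely joined, and (as no edge can join non-consecutive layers) $G\cong V_0\vee V_1\vee\cdots\vee V_d$. Writing $n_i=|V_i|$ and $f(i)=\max(i,d-i)$, one gets $\varepsilon_G(x)=f(i)$ for $x\in V_i$ and $\kappa(G)=\min_{1\le i\le d-1}n_i$, so the constraints become $n_0,n_d\ge 1$, $n_i\ge k$ for interior $i$, $\sum_i n_i=n$, and the problem reduces to maximizing
\[
\xi^{ce}(G)=\sum_{i=0}^{d}\frac{n_i\,(n_{i-1}+n_i+n_{i+1}-1)}{f(i)},\qquad n_{-1}=n_{d+1}=0.
\]

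To solve this I would move vertices one step toward the centre, processing the layers from the outside in. Shifting a single vertex from $V_j$ to $V_{j+1}$ (with $j<d/2$) changes the index by
\[
\Delta\xi^{ce}=-\frac{n_{j-1}}{f(j-1)}-\frac{n_{j-1}+n_j+n_{j+1}-1}{f(j)}+\frac{n_j+n_{j+1}+n_{j+2}-1}{f(j+1)}+\frac{n_{j+2}}{f(j+2)},
\]
since the vertices of the shrinking layer $V_j$ keep their degree (a lost intra-layer neighbour becomes an inter-layer neighbour), while $V_{j-1}$ loses a neighbour and $V_{j+1},V_{j+2}$ each gain one. Because $f$ strictly decreases toward the centre, this quantity is positive whenever $n_{j+2}\ge n_{j-1}$; in particular it is positive once the layers outside $V_j$ have already been reduced to their minima ($1$ for $V_0$, $k$ for the interior). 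Sweeping from both ends inward therefore drives $n_0=n_d=1$ and $n_i=k$ for every non-central interior $i$, with all remaining vertices accumulating at the centre. For even $d$ the centre is the single layer $i=d/2$ and this yields the unique graph $G(n,k,d)$. For odd $d$ the two central layers $i=(d\mp1)/2$ share the eccentricity $(d+1)/2$ and have equal-sized neighbours, so a shift between them leaves $\xi^{ce}$ unchanged; hence the optimum is any split of the central mass into two cliques of sizes $\ge k$, which is exactly the family $\mathcal{H}(n,k,d)$.

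The main obstacle is the sign analysis of the one-step shift. A naive move can fail: transferring a vertex directly to the centre, or shifting an inner layer before the outer ones have been emptied, can \emph{decrease} $\xi^{ce}$, because the degree loss inside a large outer clique, though weighted by the large eccentricity $d$, may still outweigh the central gain. The argument therefore hinges on the correct outermost-first ordering, which guarantees $n_{j+2}\ge n_{j-1}$ at each step, together with a separate treatment of the finitely many configurations in which $V_{j+1}$ or $V_{j+2}$ lies at or beyond the centre, where the relations among $f(j-1),f(j),f(j+1),f(j+2)$ change. By comparison, the reduction to a join of cliques is an immediate consequence of Lemma~\ref{L1}.
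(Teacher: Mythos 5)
Your proposal follows essentially the same route as the paper: partition the vertices by distance from one end of a diametral path, use Lemma~\ref{L1} to complete each layer and each pair of consecutive layers into cliques (so $G$ becomes a sequential join with interior layers of size at least $k$), and then push vertices inward layer by layer from the outside in, which is exactly the paper's transformation $G'=G-wu_0+\{wx:x\in A_3\}$ recast as an explicit optimization over the layer sizes $n_i$, including the same observation that for odd $d$ the shift between the two central layers leaves the index unchanged. One small slip: when a vertex moves from $V_j$ to $V_{j+1}$ the vertices of $V_{j+1}$ do \emph{not} gain a neighbour (they were already adjacent to it), but your displayed formula for $\Delta\xi^{ce}$ already reflects the correct accounting, so nothing is affected.
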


\begin{proof}
Let $G \in \mathbb{G}_k(n,d)$ such that $\xi ^{ce}(G)$ is as large as possible. Let $P:= u_0u_1 \dots u_d$ be a diametral path in $G$. Let $A_i= \{v\in V(G) : d_G(v, u_0)=i\}$. Then $|A_0|=1$ and  $A_0\cup A_1\cup \dots \cup A_d$ is a partition of $V(G)$.

Note that $G$ is a $k$-connected graph, we have $|A_i| \geq k $ for $i\in  \{1,2, \dots , d-1\}$.
By Lemma \ref{L1}, we have   $G[A_i]$ and $G[A_{i-1}\cup A_i]$ are complete graphs for $i\in  \{1, \dots , d\}$.
We claim that $|A_d|=1$; Otherwise, we choose a vertex $v\in A_d\setminus \{u_d\}$ and let $G^\ast =G+\{vx : x\in A_{d-2}\}$. Clearly, $G^\ast \in \mathbb{G}_k(n,d)$. By Lemma \ref{L1}, $\xi ^{ce}(G) < \xi ^{ce}(G^\ast)$, a contradiction. So $|A_d|=1$.
Thus, we have $|A_0|=|A_d|=1$, and $|A_i|\ge k$ for $i\in \{2, \dots, d-1\}$.

\noindent {\bf Case 1}. $d$ is even with $d \geq 4$.  then $|A_1|= |A_2|=\dots = |A_{\frac{d}{2}-1}|= |A_{\frac{d}{2}+1}| =\dots = |A_{d-1}|=k $ and $|A_{\frac{d}{2}}|= n-kd+2k-2$.

First, we claim that  $|A_1|=|A_{d-1}|= k$. Suppose that $|A_1|\geq k+1$, then we choose $w\in A_1\setminus \{u_1\}$ and let $G' =G-wu_0+\{wx : x\in A_3\}$. Clearly, $A_0\cup (A_1\setminus \{w\})\cup (A_2\cup \{w\}) \cup A_0\cup  \dots \cup A_d$ is a partition of $ V(G')$. From the construction of $G'$, we have   $d_G(v) = d_{G'}(v), \varepsilon_G(v) = \varepsilon_{G'}(v)$ for all $v \in V(G)\setminus (A_3\cup \{u_0,w\})$. Moreover,
 \begin{eqnarray*}
   d_G(u_0) &=& d_{G'}(u_0)+1,  \\
   \varepsilon_G(u_0) &=& \varepsilon_{G'}(u_0)=d, \\
   d_G(w) &=& d_{G'}(w)+1-|A_3|, \\
   \varepsilon_G(w) &>& \varepsilon_{G'}(w) , \\
  d_G(x) &=& d_{G'}(x)-1, \\
  \varepsilon_G(x) &=& \varepsilon_{G'}(x)< d \hbox{ for all $x \in A_3$}.
 \end{eqnarray*}
 By the definition of CEI, we have
 \begin{eqnarray*}
    \xi^{ce}(G) - \xi^{ce}(G') &=& \frac{d_G(u_0)}{\varepsilon_G(u_0)} - \frac{d_{G'}(u_0)}{\varepsilon_{G'}(u_0)}
                                    +\frac{d_G(w)}{\varepsilon_G(w)}- \frac{d_{G'}(w)}{\varepsilon_{G'}(w)}\\
                               &&+ \sum_{x \in A_3}\left( \frac{d_G(x)}{\varepsilon_G(x)}-
                               \frac{d_{G'}(x)}{\varepsilon_{G'}(x)}\right)\\
                               &&+ \sum_{v \in V(G)\setminus (A_3\cup \{u_0,w\})}\left( \frac{d_G(v)}{\varepsilon_G(v)}-
                                   \frac{d_{G'}(v)}{\varepsilon_{G'}(v)}\right)\\
                               &=& \frac{1}{d} +\frac{d_G(w)}{\varepsilon_G(w)}- \frac{d_{G'}(w)}{\varepsilon_{G'}(w)}+
                                    |A_3| \left(\frac{-1}{\varepsilon_{G}(x)}\right)\\
                               &<& \frac{1}{d} +\frac{1-|A_3|}{\varepsilon_{G'}(w)}- \frac{|A_3|}{\varepsilon_{G}(x)}\\
                               &\leq& \frac{1}{d}- \frac{|A_3|}{\varepsilon_{G}(x)}\\
                               &<& 0,
\end{eqnarray*}
  where the last inequality follows from the fact that $\varepsilon_{G}(x) < d $ and $|A_3|\ge k\ge 1$. Thus, $\xi^{ce}(G) < \xi^{ce}(G')$, a contradiction to the choice of $G$. Therefore, $|A_1|= k$. Similarly, $|A_{d-1}|=k$, as claimed.
By similar argument as above  we may also show that $|A_2|=|A_{d-2}|=k$, \dots, $|A_{\frac{d}{2}-1}|= |A_{\frac{d}{2}+1}|=k$.
Then we have  $|A_{\frac{d}{2}}|= n-kd+2k-2$.
Therefore,  $G \cong G(n,k,d)$.

\noindent {\bf Case 2.} $d$ is odd with $d \geq 3$. By similar argument as in Case 1, we have $|A_1|= |A_d|=k$, $|A_2|=|A_{d-1}|=k$, \dots,  $|A_{\frac{d-3}{2}}|= |A_{\frac{d+3}{2}}| =k$.  It follows that  $|A_{\frac{d-1}{2}}|+|A_{\frac{d-1}{2}}|= n-kd+3k-2$. Therefore, $G \in \mathcal{H}(n,k,d)$. Now
only need to show that all graphs in  $\mathcal{H}(n,k,d)$ have equal CEI. Let $G_1= K_1 \vee [(d-3)/2]K_k \vee K_{z+1} \vee [(d-3)/2]K_k \vee K_1$, where $z= n-kd+2k-3$. Clearly, $G_1 \in \mathcal{H}(n,d)$. For a graph $G_2= K_1 \vee [(d-3)/2]K_k \vee K_{s+1} \vee K_{t+1} \vee [(d-3)/2]K_k \vee K_1$, we assume its vertex partition  $A_0\cup A_1\cup \dots \cup A_d$ is defined as above. 
 If one of $s,t$ is $k-1$, then $\xi ^{ce}(G_1) \cong \xi ^{ce}(G_2)$. Suppose  that $s,t \geq k$. Let $M\subseteq A_{\frac{d+1}{2}} \setminus \{u_{\frac{d+1}{2}}\}$ and $|M|=t-k+1$. Now we obtain $G_1$ from $G_2$ by the following graph transformation:
 \[
 G_1=G_2-\{xy : x \in M, y \in A_{\frac{d+3}{2}} \} + \{xy : x \in M, y \in A_{\frac{d-3}{2}} \}.
 \]
 Then, it is easy to see that $A_0\cup A_1\cup   \dots \cup  A_{\frac{d-3}{2}} \cup ( A_{\frac{d-1}{2}}\cup M)\cup ( A_{\frac{d+1}{2}}\setminus M)\cup A_{\frac{d+3}{2}} \cup  \dots \cup A_d$ is a partition of $V(G_1)$.
From the construction of $G_1$, we have   $\varepsilon_{G_1}(v) = \varepsilon_{G_2}(v)$ for all $v \in V(G_2)$ and $d_{G_1}(v) = d_{G_2}(v)$  for all $v \in V(G_2)  \setminus (A_{\frac{d+3}{2}} \cup A_{\frac{d-3}{2}})$, it follows that
 \begin{eqnarray*}
  d_{G_2}(x) &=& d_{G_1}(x)+t-k+1 \hbox{ for each $x \in A_{\frac{d+3}{2}} $}, \\
  d_{G_2}(x) &=& d_{G_1}(x)-(t-k+1)\hbox{ for each $x \in A_{\frac{d-3}{2}} $}.
 \end{eqnarray*}
 By the definition of CEI, we have
 \begin{eqnarray*}
    \xi^{ce}(G_2) - \xi^{ce}(G_1) &=& \sum_{x \in A_{\frac{d+3}{2}}}\left( \frac{d_{G_2}(x)}{\varepsilon_{G_2}(x)}
                                    -\frac {d_{G_1}(x)}{\varepsilon_{G_1}(x)}\right)\\
                               &&+ \sum_{x \in A_{\frac{d-3}{2}}}\left( \frac{d_{G_2}(x)}{\varepsilon_{G_2}(x)}- \frac{d_{G_1}(x)}{\varepsilon_{G_1}(x)}\right)\\
                               &&+ \sum_{v \in V(G_2)  \setminus (A_{\frac{d+3}{2}} \cup A_{\frac{d-3}{2}})}\left( \frac{d_{G_2}(v)}{\varepsilon_{G_2}(v)}-
                                   \frac{d_{G_1}(v)}{\varepsilon_{G_1}(v)}\right)\\
                               &=& k\left( \frac{t-k+1}{\varepsilon_{G_2}(x)}
                                    -\frac {t-k+1}{\varepsilon_{G_2}(x)}\right)\\
                               &=&0.
\end{eqnarray*}
Thus,  $\xi^{ce}(G_2) = \xi^{ce}(G_1)$. This completes the proof.
\end{proof}

%
%
%
%
%

Let $\mathbb{G}_k(n,\alpha)$ be the class of all $k$-connected graphs of order $n$ with independence number $\alpha$. If $\alpha=1$, then $K_n$ is the unique graph in $\mathbb{G}_k(n,1)$ with maximum CEI. Therefore, we consider $\alpha\geq 2$ in what follows. Let
\[
S_{n,\alpha }= K_k \vee (K_1 \cup ( K_{n-k-\alpha} \vee (\alpha -1)K_1).
\]
Obviously, $S_{n,\alpha } \in \mathbb{G}_k(n,\alpha)$

\begin{theorem}
 Among all graphs in $\mathbb{G}_k(n,\alpha)$ with $\alpha\geq 2$, $S_{n,\alpha }$  is the unique graph with maximum CEI.
\end{theorem}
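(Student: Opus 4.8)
The plan is to reduce this to a clean edge‑counting extremal problem. Let $G\in\mathbb{G}_k(n,\alpha)$ attain the maximum CEI and fix a minimum vertex cut $S$ with $|S|=k$, so that $G-S$ splits into components $C_1,\dots,C_r$ with $r\ge 2$. First I would show that $G=K_k\vee H$, where $H:=G-S$ is a disconnected graph on $n-k$ vertices with $\alpha(H)=\alpha$. The virtue of this normal form is that the CEI then collapses to an edge count: every vertex of $S$ is adjacent to all others, hence has eccentricity $1$ and degree $n-1$, while every vertex $v$ of $H$ has a non‑neighbour in another component and so has eccentricity exactly $2$ with $d_G(v)=k+d_H(v)$. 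Summing these contributions yields
\[
\xi^{ce}(G)=k(n-1)+\frac{k(n-k)}{2}+|E(H)|,
\]
so that, among graphs of the form $K_k\vee H$ with $H$ disconnected on $n-k$ vertices, maximizing the CEI is \emph{exactly} maximizing $|E(H)|$.

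To justify the normal form I would invoke Lemma~\ref{L1}: adding any edge that has an endpoint in $S$, or that joins two vertices of a single component, strictly increases the CEI, and such an edge never connects two different components, so $S$ remains a cut and the connectivity stays exactly $k$. Consequently the extremal $G$ must already be complete on $S$ and completely joined to $V(G)\setminus S$, giving $G=K_k\vee H$ with $H=G-S$ disconnected. The delicate point, which I expect to be the main obstacle, is controlling the independence number under these additions: an edge inside $S$, or from $S$ into a component, could a priori drop $\alpha$ below its prescribed value. I would resolve this by arranging that a maximum independent set lies entirely inside $H$ (equivalently, choosing the cut $S$ disjoint from some maximum independent set), so that completing $S$ leaves $\alpha(G)=\alpha(H)=\alpha$ untouched; legitimizing this compatible choice of cut and independent set is the technical heart of the argument, and like the proof of Theorem~\ref{T1} it is most safely executed through explicit $\xi^{ce}$‑difference transformations.

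It then remains to maximize $|E(H)|$ over disconnected graphs $H$ on $m:=n-k$ vertices with $\alpha(H)=\alpha$. For a component $D_i$ of order $m_i$ and independence number $a_i$ one has $|E(D_i)|\le\binom{m_i}{2}-\binom{a_i}{2}$, with equality precisely when $D_i$ is the complete split graph $K_{m_i-a_i}\vee a_iK_1$; hence counting non‑edges gives $\binom{m}{2}-|E(H)|\ge\sum_{i<j}m_im_j+\sum_i\binom{a_i}{2}$ subject to $\sum_i m_i=m$ and $\sum_i a_i=\alpha$, with equality iff every component is a complete split graph. A short convexity/exchange computation then minimizes this deficiency: concentrating the orders minimizes $\sum_{i<j}m_im_j$, and the cross term dominates any saving one could obtain in $\sum_i\binom{a_i}{2}$ by enlarging a second component, so (using the feasibility hypothesis $n-k-\alpha\ge 1$, i.e. $m>\alpha$) the unique minimizer is $r=2$ with $(m_1,a_1)=(m-1,\alpha-1)$ and $(m_2,a_2)=(1,1)$. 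This forces $H\cong K_1\cup\bigl(K_{n-k-\alpha}\vee(\alpha-1)K_1\bigr)$, whence $G\cong S_{n,\alpha}$, and the equality case for the complete split bound delivers the uniqueness.
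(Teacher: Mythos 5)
There is a genuine gap, and it sits exactly where you flag ``the technical heart'': you never prove that the extremal graph admits a minimum cut $S$ disjoint from some maximum independent set, which is what you need before you may complete $S$ and join it to everything while keeping the independence number equal to $\alpha$. This is not a formality. Lemma~\ref{L1} only lets you add a non-edge $uv$ when some maximum independent set omits $u$ or $v$, so a priori the optimizer could have every minimum cut meeting every maximum independent set; this is precisely the situation $a=|A\cap I|\ge 1$ that the paper's Claims 2--4 are built to eliminate, and it is where all of its ad hoc degree-redistributing transformations live, because naive edge addition there would drop $\alpha$ below its prescribed value. Until you supply those transformations (or an argument such as: any $v\in S$ omitted by some maximum independent set has every one of its non-edges addable, hence is universal in the extremal graph; while a $v\in S$ lying in \emph{every} maximum independent set forces a rigid structure that you then beat by a direct $\xi^{ce}$ comparison), the normal form $G=K_k\vee H$ is unproved and the entire second half of your argument hangs on it. A smaller but real point: your step ``such an edge never connects two different components, so the connectivity stays exactly $k$'' tacitly reads $\mathbb{G}_k(n,\alpha)$ as the graphs of connectivity \emph{exactly} $k$; under the paper's literal ``at least $k$'' definition the theorem is false (for $k=1$, $n=5$, $\alpha=2$ the complete split graph $K_3\vee 2K_1$ lies in the class and has CEI $15$, versus $9$ for $S_{5,2}$), so that reading must be made explicit.

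That said, everything downstream of the normal form is correct and is a genuinely different, and cleaner, route than the paper's. The identity $\xi^{ce}(K_k\vee H)=k(n-1)+\tfrac{k(n-k)}{2}+|E(H)|$ is right, the bound $|E(D_i)|\le\binom{m_i}{2}-\binom{a_i}{2}$ with equality exactly for complete split components is right, and the exchange argument locating the unique minimizer of $\sum_{i<j}m_im_j+\sum_i\binom{a_i}{2}$ at $r=2$, $(m_1,a_1)=(n-k-1,\alpha-1)$, $(m_2,a_2)=(1,1)$ does go through when $n-k>\alpha$ (and degenerates correctly to $K_k\vee\alpha K_1$ when $n=k+\alpha$). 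The paper instead fixes a possibly intersecting pair $(I,A)$ and works through four claims by explicit $\xi^{ce}$-difference computations; your reduction to a pure edge-counting problem would be shorter and more transparent --- but only once the normalization step is actually established.
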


\begin{proof}
 Note that since $G \in \mathbb{G}_k(n,\alpha)$, we have $k+\alpha \le n$. If $k+\alpha= n$ then $G \cong K_k \vee \alpha K_1$, and the result holds in this case.  Therefore, we consider the case $k+\alpha +1 \le n$ in what follows.

 Let $G \in \mathbb{G}_k(n,\alpha)$ such that $\xi ^{ce}(G)$ is as large as possible. Let $I$ and $A$ be the maximum  independent set and vertex cut of $G$, respectively with $|I|=\alpha $ and $|A|=k$. Let $G_1, G_2, \dots ,G_s$ be the components of $G-A$ with $s\geq 2$. Assume that  $|G_1|\geq |G_2|\geq \dots \geq|G_s|$. We claim that $G_1$ is non-trivial; Otherwise, then $G_i$ is trivial for $i\in  \{1,2, \dots , s\}$, and the independence number of $G$ is at least $n-k ~(\geq \alpha +1)$, a contradiction. So, $G_1$ is non-trivial. Let $|A\cap I|=a, |A\setminus I|=b$ and $|V(G_i)\cap I|=n_i|, |V(G_i)\setminus I|=m_i$ for $i\in  \{1,2, \dots , s\}$. Obviously, $k=a+b$ and $V(G_i)=n_i+m_i$ for $i\in  \{1,2, \dots , s\}$.  We proceed with the following claims.

 \noindent {\bf Claim 1.} $G-A$ contains exactly two components, i.e., $s=2.$

\noindent {\bf Proof of Claim 1.}
 Suppose that $s\geq 3$. Since $G_1$ is non-trivial, we have $V(G_1)\setminus I \neq \emptyset$. Then choose $u \in V(G_1)\setminus I$ and $v \in V(G_2)$. Let $H=G+uv$. Clearly, $H \in \mathbb{G}_k(n,\alpha)$.  By Lemma \ref{L1}, we have $\xi ^{ce}(G) < \xi ^{ce}(H)$, a contradiction. So, $s=2.$

 \noindent {\bf Claim 2.} $G[A] \cong K_b \vee aK_1, G_i \cong K_{m_i}\vee n_iK_1$ and $G[V(G_i)UA] \cong K_{b+m_i}\vee (a+n_i)K_1$ for $i=1,2$.

\noindent {\bf Proof of Claim 2.}
 First, we show that $G[A] \cong K_b \vee aK_1$. Suppose that $G[A] \ncong K_b \vee aK_1$. Then there exist $u,v \in A \setminus I$ or $u \in A \setminus I, v \in A\cap I$. Let  $Q=G+uv$. Clearly, $Q \in \mathbb{G}_k(n,\alpha)$. By Lemma \ref{L1}, we have $\xi ^{ce}(G) < \xi ^{ce}(Q)$, a contradiction. So, $G[A] \cong K_b \vee aK_1$. By similar techniques we can show that $G_i \cong K_{m_i}\vee n_iK_1$ and $G[V(G_i)UA] \cong K_{b+m_i}\vee (a+n_i)K_1$ for $i=1,2$.

  \noindent {\bf Claim 3.} $G_2$ is trivial.

  \noindent {\bf Proof of Claim 3.}
   Suppose that $G_2$ is non-trivial. Then  we have the following two possible cases.

\noindent  {\bf Case 1.} $n_2=0$.
If $a=0$, then $I=V(G_1)\cap I$. Choose $w \in V(G_2)\setminus I$, we get $I\cup \{w\}$ is an independent set such that $|I\cup \{w\}|=\alpha +1$, a contradiction. So, $a \geq 1$.
   Let $G' =G-\{wx : x \in V(G_2)\setminus \{w\}\}+ \{xy : x \in V(G_2)\setminus \{w\}, y\in V(G_1)\}$.
   Clearly, $G' \in \mathbb{G}_k(n,\alpha)$. From the construction of $G'$, we have $\varepsilon_G(v) = \varepsilon_{G'}(v)=1$ for each $v \in A \setminus I$ and   $\varepsilon_G(v) = \varepsilon_{G'}(v)=2$ for each $v \in V(G)\setminus (A \setminus I)$. Moreover,
  \begin{eqnarray*}
   d_G(w) &=& d_{G'}(w)+m_2-1,  \\
   \varepsilon_G(w) &=& \varepsilon_{G'}(w)=2,\\
   d_G(x) &=& d_{G'}(x)+1- n_1- m_1 \hbox{ for all $x \in V(G_2)\setminus \{w\}$}, \\
   \varepsilon_G(x) &=& \varepsilon_{G'}(x)=2 \hbox{ for all $x \in V(G_2)\setminus \{w\}$}, \\
   d_G(x) &=& d_{G'}(x)+1- m_2 \hbox{ for all $x \in V(G_1)$},\\
   \varepsilon_G(x) &=& \varepsilon_{G'}(x)=2 \hbox{ for all $x \in V(G_1)$},\\
   d_G(x) &=& d_{G'}(x) \hbox{ for all $x \in A$}.
 \end{eqnarray*}

  By the definition of CEI, we have

 \begin{eqnarray*}
    \xi^{ce}(G) - \xi^{ce}(G') &=& \frac{d_G(w)}{\varepsilon_G(w)} - \frac{d_{G'}(w)}{\varepsilon_{G'}(w)}
                                    +\sum_{x \in V(G_2)\setminus \{w\}}\left( \frac{d_G(x)}{\varepsilon_G(x)}- \frac{d_{G'}(x)}{\varepsilon_{G'}(x)}\right)\\
                               &&+ \sum_{x \in V(G_1)}\left( \frac{d_G(x)}{\varepsilon_G(x)}- \frac{d_{G'}(x)}{\varepsilon_{G'}(x)}\right)\\
                               &&+ \sum_{x \in A}\left( \frac{d_G(x)}{\varepsilon_G(x)}- \frac{d_{G'}(x)}{\varepsilon_{G'}(x)}\right)\\
                               &=& \frac{1}{2}{m_2-1-(m_2+n_2-1)^2-(m_2-1)(m_1+n_1-1) }\\
                               &<&0,
\end{eqnarray*}
  a contradiction.

\noindent  {\bf Case 2.} $n_2 \neq 0$.

  Choose $w \in V(G_2)\cap I$. Let
  $G'' =G-\{wx : x \in V(G_2)\}+ \{xy : x \in V(G_1) \cap I , y\in V(G_2)\setminus I\}+ \{xy : x \in V(G_1) \setminus I , y \in V(G_2)\setminus \{w\}\}
  $.
   Clearly, $G'' \in \mathbb{G}_k(n,\alpha)$. From the construction of $G''$, we have $\varepsilon_G(v) = \varepsilon_{G''}(v)$ for all $v \in V(G)$. Moreover,
  \begin{eqnarray*}
   d_G(v) &=& d_{G''}(v) \hbox{ for all $v \in A$}, \\
   d_G(w) &=& d_{G''}(w)+m_2,\\
   d_G(x) &=& d_{G''}(x)+1-n_1-m_1 \hbox{ for all $x \in V(G_2)\setminus I$}, \\
   d_G(x) &=& d_{G''}(x)-m_1 \hbox{ for all $x \in (V(G_2)\cap I)\setminus \{w\}$}, \\
   d_G(x) &=& d_{G''}(x)-m_2 \hbox{ for all $x \in V(G_1)\cap I$}, \\
   d_G(x) &=& d_{G''}(x)+1-n_2-m_2 \hbox{ for all $x \in V(G_1)\setminus I$}. \\
 \end{eqnarray*}
By the definition of CEI, we have

 \begin{eqnarray*}
    \xi^{ce}(G) - \xi^{ce}(G'') &=& \frac{d_G(w)}{\varepsilon_G(w)} - \frac{d_{G''}(w)}{\varepsilon_{G''}(w)}+
                                    \sum_{x \in V(G_2)\setminus I}\left( \frac{d_G(x)}{\varepsilon_G(x)}- \frac{d_{G''}(x)}{\varepsilon_{G''}(x)}\right)\\
                                &&+ \sum_{x \in (V(G_2)\cap I)\setminus \{w\}}\left( \frac{d_G(x)}{\varepsilon_G(x)}-
                                    \frac{d_{G''}(x)} {\varepsilon_{G''}(x)}\right)\\
                                &&+ \sum_{x \in  V(G_1)\cap I}\left( \frac{d_G(x)}{\varepsilon_G(x)}- \frac{d_{G''}(x)}{\varepsilon_{G''}(x)}\right)\\
                                &&+ \sum_{x \in V(G_1)\setminus I}\left( \frac{d_G(x)}{\varepsilon_G(x)}-\frac{d_{G''}(x)}{\varepsilon_{G''}(x)}\right)\\
                                &&+ \sum_{v \in A}\left( \frac{d_G(v)}{\varepsilon_G(v)}-\frac{d_{G''}(v)}{\varepsilon_{G''}(v)}\right)\\
                                &=& \frac{1}{\varepsilon_G(v)} \{ m_2-m_2(n_1+m_1-1)-m_1(n_2-1)\\
                                &-& n_1m - m_1(n_2+m_2-1) \}\\
                                &<&0,
\end{eqnarray*}
 a contradiction. So, $G_2$ is trivial.

  \noindent {\bf Claim 4.} $V(G_2) \subseteq I$.

  \noindent {\bf Proof of Claim 4.}
  Suppose that  $V(G_2) \nsubseteq I$, then   $a \geq 2$. Suppose that $a\leq 1$. If $a=1$, i.e.,  $A\cap I =\{w_1\}$. Let
   $G^\ast =G+\{w_1x : x \in V(G_1)\cap I\}$.
   Clearly, $G^\ast \in \mathbb{G}_k(n,\alpha)$. By Lemma \ref{L1}, we have $\xi ^{ce}(G) < \xi ^{ce}(G^\ast)$, a contradiction. If $a=0$,  then $I\cup V(G_2)$ is an independent set of $G$ such that $|I\cup V(G_2)|=\alpha +1$, a contradiction.   So, $a \geq 2$.
    Since $G_1$ is non-trivial then $V(G_1)\setminus I \neq \emptyset $. Choose $w_2 \in V(G_1)\setminus I$. Let
    $G^{\ast\ast} =G-\{w_1v : v = V(G_2)\}+ \{w_2v : v = V(G_2)\}$.
       Clearly, $G^{\ast\ast} \in \mathbb{G}_k(n,\alpha)$.

     From the construction of $G^{\ast\ast}$, we have $ \varepsilon_G(x) = \varepsilon_{G^{\ast\ast}}(x)=1$ for all $x \in A\setminus I$, $ \varepsilon_G(x) = \varepsilon_{G^{\ast\ast}}(x)=2$ for all $x \in (A\cap I)\cup(V(G_1)\setminus \{w_2\})\cup\{v\}$ , and  $d_G(x) = d_{G^{\ast\ast}}(x)$ for all $x \in V(G)\setminus  \{w_1,w_2\}$. Moreover,
     \begin{eqnarray*}
   d_G(w_1) &=& d_{G^{\ast\ast}}(w_1)+1,  \\
   \varepsilon_G(w_1) &=& \varepsilon_{G^{\ast\ast}}(w_1)=2,\\
   d_G(w_2) &=& d_{G^{\ast\ast}}(w_2)-1, \\
   \varepsilon_G(w_2)=2 &>& \varepsilon_{G^{\ast\ast}}(w_2)=1. \\
  \end{eqnarray*}
   By the definition of CEI, we have
   \begin{eqnarray*}
    \xi^{ce}(G) - \xi^{ce}(G^{\ast\ast}) &=& \frac{d_G(w_1)}{\varepsilon_G(w_1)} - \frac{d_{G^{\ast\ast}}(w_1)}{\varepsilon_{G^{\ast\ast}}(w_1)}
                                            +\frac{d_G(w_2)}{\varepsilon_G(w_2)}- \frac{d_{G^{\ast\ast}}(w_2)}{\varepsilon_{G^{\ast\ast}}(w_2)}\\
                                         &&+ \sum_{x \in V(G)\setminus  \{w_1,w_2\}}\left( \frac{d_G(x)}{\varepsilon_G(x)}- \frac{d_{G^{\ast\ast}}(x)}{\varepsilon_{G^{\ast\ast}}(x)}\right)\\
                                         &=& -\frac{1+d(w_2)}{2}\\
                                         &<&0,
\end{eqnarray*}
   a contradiction. So, $V(G_2) \subseteq I$. From claim 1--4, we have $G \cong S_{n,\alpha }$. This completes the proof.
  \end{proof}

Let $\mathbb{G}_k(n,\delta)$ be the class of all $k$-connected graphs of order $n$ with minimum degree at least $\delta$.  Let
\[
M_{n,\delta }= K_k \vee (K_{\delta-k+1} \cup  K_{n-\delta-1}).
\]
Obviously, $M_{n,\delta } \in \mathbb{G}_k(n,\delta)$

\begin{theorem}
 Among all graphs in $\mathbb{G}_k(n,\delta)$,  $M_{n,\delta }$  is the unique graph with maximum CEI.
\end{theorem}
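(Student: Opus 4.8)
The plan is to follow the same edge-addition strategy used in the previous two theorems, applying Lemma~\ref{L1} repeatedly to force the extremal graph into a rigid join form, and then to finish with a short convexity computation. Throughout, the essential constraint to respect is that the connectivity stays \emph{exactly} $k$: since adding edges can only raise connectivity, every edge I insert must be chosen so as to preserve some vertex cut of size $k$. Concretely, let $G\in\mathbb{G}_k(n,\delta)$ attain the maximum CEI, fix a minimum vertex cut $A$ with $|A|=k$, and let $G_1,\dots,G_s$ be the components of $G-A$ with $|V(G_1)|\ge\cdots\ge|V(G_s)|$.

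First I would reduce the number of components to two. If $s\ge 3$, adding any edge between $G_1$ and $G_2$ strictly increases the CEI by Lemma~\ref{L1}, while $A$ still separates $G_3$ from the rest, so the new graph keeps connectivity $k$ and minimum degree $\ge\delta$; this contradicts maximality, giving $s=2$. Next, using the same device but now inserting edges only \emph{inside} $A$ or inside a single $A\cup V(G_i)$---which never joins $G_1$ to $G_2$ and hence keeps $A$ a cut---I would show $G[A]$ is complete and each $G[A\cup V(G_i)]$ is complete. Consequently $G\cong K_k\vee(K_p\cup K_q)$ with $p+q=n-k$ and $p,q\ge 1$; here $\kappa=k$ (the $k$ vertices of $A$ give $k$ internally disjoint paths between the two cliques) and $\delta(G)=\min\{p,q\}+k-1$.

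It remains to optimize over $p,q$. In $K_k\vee(K_p\cup K_q)$ the $k$ cut-vertices have degree $n-1$ and eccentricity $1$, while every other vertex has eccentricity $2$ and degree $k+p-1$ or $k+q-1$, so
\[
\xi^{ce}(G)=k(n-1)+\tfrac12\bigl[(k-1)(n-k)+p^2+q^2\bigr].
\]
Since $p+q=n-k$ is fixed, maximizing the CEI is equivalent to maximizing $p^2+q^2$ under the minimum-degree constraint $\min\{p,q\}\ge \delta-k+1$. By strict convexity, $p^2+q^2$ with fixed sum increases as the pair becomes more unbalanced, so the maximum is attained precisely at the most unbalanced admissible split, namely $\min\{p,q\}=\delta-k+1$, i.e.\ $\{p,q\}=\{\delta-k+1,\,n-\delta-1\}$. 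This yields $G\cong M_{n,\delta}$, and strict convexity also delivers uniqueness.

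The structural reduction is routine and parallels the earlier arguments; the point requiring genuine care is the bookkeeping that each edge insertion preserves a size-$k$ cut (so that connectivity stays exactly $k$ rather than increasing), together with verifying that the minimum-degree hypothesis $\delta\ge k$ makes both cliques nonempty, which is what guarantees the uniform eccentricity value $2$ used in the final computation.
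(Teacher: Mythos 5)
Your proof is correct, and its structural first half---reducing to two components via Lemma~\ref{L1}, then completing $G[A]$ and each $G[A\cup V(G_i)]$ to conclude $G\cong K_k\vee(K_p\cup K_q)$ with $p+q=n-k$---is exactly the reduction the paper performs. Where you diverge is the final step. The paper stays local: assuming the smaller clique has $a_1>\delta-k+1$, it detaches one vertex $w$ from $K_{a_1}$ and joins it to everything in $K_{a_2}$, computes $\xi^{ce}(G)-\xi^{ce}(G')=a_1-a_2-1<0$, and concludes by contradiction that $a_1=\delta-k+1$. You instead write the closed form
\[
\xi^{ce}\bigl(K_k\vee(K_p\cup K_q)\bigr)=k(n-1)+\tfrac12\bigl[(k-1)(n-k)+p^2+q^2\bigr]
\]
and maximize $p^2+q^2$ subject to $p+q=n-k$ and $\min\{p,q\}\ge\delta-k+1$ by strict convexity. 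The two finishes are equivalent in content (the paper's one-vertex shift is precisely the discrete step of your convexity argument), but yours is arguably cleaner: it makes uniqueness immediate and avoids the degree/eccentricity bookkeeping of the perturbation, at the cost of having to verify the explicit formula. Your explicit attention to why each edge insertion preserves a cut of size exactly $k$, and why $\delta\ge k$ keeps both cliques nonempty so that eccentricities are $1$ on $A$ and $2$ elsewhere, addresses points the paper glosses over.
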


\begin{proof}
 Note that $G \in \mathbb{G}_k(n,\delta)$, we have $k+1 \le n$. If $k+1= n$, then $G \cong K_k \cong M_{n,\delta }$, and the result holds in this case.  Therefore, we consider the case $k+2 \le n$ in what follows.

 Let $G \in \mathbb{G}_k(n,\delta)$ such that $\xi ^{ce}(G)$ is as large as possible. Let $A$ be the vertex cut of $G$ with $|A|=k$. Let $G_1, G_2, \dots ,G_r$ be the components of $G-A$ with $r\geq 2$. We claim that $r=2$; Otherwise $r\geq 3$, and let $G_1, G_2, G_3$ be at least three components of $G$. Let $G' =G+\{xy : x \in V(G_2), y \in V(G_3) \}$. Clearly, $G' \in \mathbb{G}_k(n,\delta)$.  By Lemma \ref{L1}, we have $\xi ^{ce}(G) < \xi ^{ce}(G')$, a contradiction. So, $r=2.$ Also by Lemma \ref{L1},    $G[V(G_1)\cup A] $ and $G[V(G_2)\cup A] $ are complete. Thus, we have $G\cong K_k \vee (K_{a_1} \cup K_{a_2})$, where  $a_1= |V(G_1)|, a_2= |V(G_2)|$, and $a_1 + a_2 = n-k.$
  Without loss of generality, we assume that $a_1 \leq a_2$.

To complete the proof it sufficies to show that $a_1= \delta -k +1$. Suppose that $a_1 > \delta -k +1$. For $w \in V(G_1)$, let $G' =G-\{wx : x \in V(G_1)\setminus \{w\}\}+\{wx : x \in V(G_2)\} $. Clearly, $G' \in \mathbb{G}_k(n,\delta)$.

   From the construction of $G'$, we have
     \begin{eqnarray*}
   d_G(w) &=& d_{G'}(w)+a_1-a_2-1,  \\
   \varepsilon_G(w) &=& \varepsilon_{G'}(w)=2,\\
   d_G(z) &=& d_{G^{\ast\ast}}(z)+1,  \\
   \varepsilon_G(z) &=& \varepsilon_{G'}(z)=2 \hbox{ for all $z \in V(G_1)\setminus \{w\}$}, \\
   d_G(t) &=& d_{G^{\ast\ast}}(t)-1,  \\
   \varepsilon_G(t) &=& \varepsilon_{G'}(t)=2 \hbox{ for all $t \in V(G_2)$}, \\
   d_G(x) &=& d_{G^{\ast\ast}}(x),\\
   \varepsilon_G(x) &=& \varepsilon_{G'}(x)=1 \hbox{ for all $x \in A$}.
  \end{eqnarray*}

   By the definition of CEI, we have

 \begin{eqnarray*}
    \xi^{ce}(G) - \xi^{ce}(G') &=& \frac{d_G(w)}{\varepsilon_G(w)} - \frac{d_{G'}(w)}{\varepsilon_{G'}(w)}+
                                    \sum_{z \in V(G_1)\setminus \{w\}}\left( \frac{d_G(z)}{\varepsilon_G(z)}- \frac{d_{G'}(z)}{\varepsilon_{G'}(z)}\right)\\
                                &&+ \sum_{t \in V(G_2)}\left( \frac{d_G(t)}{\varepsilon_G(t)}-
                                    \frac{d_{G'}(t)} {\varepsilon_{G'}(t)}\right)\\
                                &&+ \sum_{x \in A}\left( \frac{d_G(x)}{\varepsilon_G(x)}- \frac{d_{G'}(x)}{\varepsilon_{G'}(x)}\right)\\
                                &=&  a_1-1-a_2\\
                                &<&0,
\end{eqnarray*}
where the last inequality follows due to the fact that $a_1 \leq a_2$, a contradiction. So, $a_1= \delta -k +1$, one has $a_2= n-\delta-1$.   Thus, $G \cong M_{n,\delta }$. This completes the proof.
  \end{proof}

\end{document}